\documentclass[reqno,12pt]{amsart}
\usepackage[margin=30mm]{geometry}
\usepackage{amssymb,amsmath,color}
\usepackage{hyperref}
\usepackage{scalerel}
\usepackage{soul}

\newtheorem{theorem}{Theorem}[section]

\theoremstyle{definition}

\newtheorem{remark}{Remark}

\def\del{\partial}
\newcommand{\ds}{\displaystyle}

\definecolor{red}{rgb}{.8,.1,.1}
\definecolor{blue}{rgb}{.2,.2,.8}
\definecolor{teal}{rgb}{0.1, .502, .502}

\begin{document}
\title[Franklin and Beck-type identities for $n$-color partitions]{Franklin's identity for $n$-color partitions\\ and companion Beck-type identities}

\author[C. Ballantine]{Cristina Ballantine}

\address{Department of Mathematics and Computer Science\\ 
College of the Holy Cross \\ Worcester, MA 01610, USA} 
\email{cballant@holycross.edu} 

\author[R. Tauraso]{Roberto Tauraso}

\address{Dipartimento di Matematica, 
Università di Roma ``Tor Vergata'', 00133 Roma, Italy}
\email{tauraso@mat.uniroma2.it}
\urladdr{https://robertotauraso.github.io/}

\subjclass{Primary 05A19, 11P81; Secondary 05A17, 05A15.}


\keywords{$n$-color partitions, Franklin’s identity, Beck-type identities, generating functions.}

\begin{abstract} We show that some classical identities valid for ordinary partitions have precise analogues for $n$-color partitions, that is partitions in which a part of size $n\geq 1$ can occur in colors $1, 2, \ldots, n$.
For $r \ge 2$ and $j \ge 0$, we write $\mathcal{O}_{j,r}(m)$ and $\mathcal{D}_{j,r}(m)$ for the sets of $n$-color partitions of $m$ with, respectively, exactly $j$ different parts whose size and color are divisible by $r$, and exactly $j$ different parts occurring at least $r$ times. We prove an $n$-color version of Franklin's theorem, $|\mathcal{O}_{j,r}(m)| = |\mathcal{D}_{j,r}(m)|$, along with two Beck-type identities. We give both analytic and combinatorial proofs for all theorems.  
\end{abstract}

\maketitle

\section{Introduction}

A \emph{partition} $\lambda = (\lambda_1, \lambda_2, \dots, \lambda_r)$ of a positive integer $m$ is a finite sequence of non-increasing positive integers $\lambda_i$, called \emph{parts}, such that $m = \sum_{i=1}^r \lambda_i$. The partition function $p(m)$ counts the partitions of $m$, with generating function (\cite{Andrews98} and \cite[A000041]{OEIS}) 
\begin{align}\label{pgf}
\sum_{m=0}^{\infty} p(m)q^m = \prod_{m=1}^{\infty} \dfrac{1}{1-q^m}
=1+q+2q^2+3q^3+5q^4+7q^5+ 11q^6+\dots
\end{align}

In 2017, Beck proposed two conjectural identities that relate the number of parts in partitions into odd parts, the number of parts in partitions into distinct parts, and the number of partitions in which even parts or a repeated part play a special role. Andrews \cite{Andrews17} proved these conjectures analytically and later Yang \cite{Yang19}
provided combinatorial proofs.

\begin{enumerate}

\item[i)] The number of partitions of $m$ in which the set of even parts has only one element is equal to the difference between the number of parts in the odd partitions of $m$ and the number of parts in the distinct partitions of $m$ (\cite[A090867]{OEIS}).

\item[ii)] The number of partitions of $m$  with exactly one part that occurs three times, while all other parts occur only once is equal to the difference between the number of parts in the distinct partitions of $m$ and the number of distinct parts in the odd partitions of $m$ (\cite[A265251]{OEIS}).

\end{enumerate}

Since then, several authors have obtained refinements and generalizations of Beck's identities. Among the relevant literature, we mention \cite{FuTang17,Yang19,BW21, BW23}.

A now-classic, though perhaps not widely known, variation of integer partitions is that of $n$-color partitions (also referred to as partitions with ``$n$ copies of $n$").
An $n$-color partition of $m$ is a partition in which each part of size $n$ can appear in $n$ distinct colors, denoted by a subscript $i \in \{1, 2, \dots, n\}$. For a given colored part $n_i$, we refer to $n$ as the size of the part and to $i$ as the subscript (or color) of the part. For example, the six $n$-color partitions of $3$ are $(1_1,1_1,1_1)$, $(2_1,1_1)$, $(2_2,1_1)$, $(3_1)$, $(3_2)$, $(3_3)$.
This notion was introduced by Agarwal and Andrews \cite{Agarwal-Andrews87} in connection with Rogers--Ramanujan-type identities. Writing $p_{nc}(m)$ for the number of $n$-color partitions of $m$, the generating  function of $p_{nc}(m)$ (\cite[A000219]{OEIS}) is given by
\begin{align*}
\sum_{m=0}^{\infty} p_{nc}(m)q^m = \prod_{m=1}^{\infty} \dfrac{1}{(1-q^m)^m}
=1+q+3q^2+6q^3+13q^4+24q^5+ 48q^6+\dots
\end{align*}
Notice that this generating function coincides with that of the number of plane partitions. In \cite{Agarwal02}, Agarwal provides an indirect bijection proving,  through some intermediate combinatorial objects, that  that plane partitions and $n$-color partitions of $m$ are equinumerous.

Recently, several authors have shown that classical identities for ordinary partitions admit analogues in the $n$-color setting (see, for example, \cite{Bandyopadhyay-Baruah21, Sharma-Kaur25, NR26}).
Inspired by the work of Ballantine and Welch \cite{BW21, BW23}, we establish $n$-color analogues of Beck's original identities and several of their generalizations. More precisely, for $r \ge 2$ and $j \ge 0$, let $\mathcal{O}_{j,r}(m)$ denote the set of $n$-color partitions of $m$ with exactly $j$ different parts whose size and subscript  are both divisible by $r$, and let $\mathcal{D}_{j,r}(m)$ denote the set of $n$-color partitions of $m$ with exactly $j$ different parts occurring at least $r$ times. The results of this paper are as follows.
\begin{itemize}
    
\item In Section~2 we obtain an $n$-color version of Franklin's theorem (Theorem \ref{F}), proving that $|\mathcal{O}_{j,r}(m)| = |\mathcal{D}_{j,r}(m)|$ for all $r \ge 2$ and $m, j \ge 0$. We give both a generating-function proof, via trivariate generating functions recording the number of parts, and an explicit combinatorial bijection $\varphi_j : \mathcal{O}_{j,r}(m) \to \mathcal{D}_{j,r}(m)$ built from a Glaisher-type transformation.

\item In Section~3 we prove an $n$-color analogue of a Beck-type identity (Theorem~\ref{B1}), relating the counting sequence $(j+1)|\mathcal{O}_{j+1,r}(m)| - j|\mathcal{O}_{j,r}(m)|$ to the difference in the total number of parts in all partitions in $\mathcal{O}_{j,r}(m)$ and the total number of parts in all partitions in $\mathcal{D}_{j,r}(m)$. Again, both an analytic proof, through logarithmic differentiation of the generating functions of Section~2, and a combinatorial proof are given.

\item Section~4 presents a second $n$-color Beck-type identity (Theorem~\ref{B2}), expressing the difference $T_{j+1,r}(m) - T_{j,r}(m)$, where $T_{j,r}(m)$ is the number of parts with multiplicity strictly between $r$ and $2r$ in all partitions of $\mathcal{D}_{j,r}(m)$, in terms of the difference in the number of \emph{distinct} parts between $\mathcal{D}_{j,r}(m)$ and $\mathcal{O}_{j,r}(m)$.
\end{itemize}

Together, these results show that the Beck-type phenomenon first observed for ordinary partitions extends to the family of $n$-color partitions.

\section{$n$-color version of Franklin's Theorem}

\begin{theorem} \label{F} For $r\geq2$ and $m,j\geq 0$, we have
\begin{equation}\label{FTid}
|\mathcal{O}_{j,r}(m)|=|\mathcal{D}_{j,r}(m)|.
\end{equation} 
\end{theorem}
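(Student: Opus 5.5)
The plan is a generating-function proof. I will compute the bivariate generating functions
\[
\sum_{m,j\ge0}|\mathcal{O}_{j,r}(m)|\,z^jq^m
\qquad\text{and}\qquad
\sum_{m,j\ge0}|\mathcal{D}_{j,r}(m)|\,z^jq^m
\]
as infinite products and show they coincide. Extracting the coefficient of $z^jq^m$ then gives \eqref{FTid}. Throughout, $z$ marks the number of \emph{different} colored parts $n_i$ having the relevant property. A colored part means the pair (size, subscript), so $4_2$ and $4_4$ count as different parts.

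\textbf{The set $\mathcal{O}_{j,r}(m)$.} Call $n_i$ special if $r\mid n$ and $r\mid i$.

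1. For each size $n=rk$, the special colors are $i=r,2r,\dots,rk$. So there are exactly $k$ special colored parts of size $rk$.
2. There are $n-\frac{n}{r}[r\mid n]$ non-special colored parts of size $n$. Each contributes an unrestricted factor $1/(1-q^n)$.
3. Each special part $(rk)_{rl}$ contributes $1+zq^{rk}/(1-q^{rk})$, since $z$ is recorded once if the part occurs at all.
4. The key simplification is
\[
1+\frac{zq^{rk}}{1-q^{rk}}=\frac{1-(1-z)q^{rk}}{1-q^{rk}}.
\]
5. The denominators $(1-q^{rk})^{k}$ from step 4 exactly restore the missing factors from step 2. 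This gives
\[
\sum_{m,j}|\mathcal{O}_{j,r}(m)|z^jq^m=\prod_{n\ge1}\frac{1}{(1-q^n)^n}\prod_{k\ge1}\bigl(1-(1-z)q^{rk}\bigr)^{k}.
\]

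\textbf{The set $\mathcal{D}_{j,r}(m)$.} Each of the $n$ colored parts of size $n$ appears either fewer than $r$ times or at least $r$ times.

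1. Fewer than $r$ times contributes $(1-q^{rn})/(1-q^n)$.
2. At least $r$ times contributes $zq^{rn}/(1-q^n)$.
3. Their sum is $\bigl(1-(1-z)q^{rn}\bigr)/(1-q^n)$.
4. Taking the product over all $n$ colored parts of each size $n$ gives
\[
\sum_{m,j}|\mathcal{D}_{j,r}(m)|z^jq^m=\prod_{n\ge1}\frac{1}{(1-q^n)^n}\prod_{n\ge1}\bigl(1-(1-z)q^{rn}\bigr)^{n}.
\]

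Renaming $k\to n$ shows the two products are identical, which proves the theorem. The only delicate step is the bookkeeping for $\mathcal{O}$: checking that size $rk$ has exactly $k$ colors divisible by $r$, which is precisely the exponent matching the $n$-fold multiplicity in the $\mathcal{D}$ product. I expect that matching to be the crux. To record the number of parts, as the later sections require, one can add a variable $t$ by replacing $q^n\mapsto tq^n$ per part. The same computation goes through, except that $q^{rk}$ becomes $t q^{rk}$ on the $\mathcal{O}$ side and $t^r q^{rn}$ on the $\mathcal{D}$ side; so the trivariate functions differ, and only the $t=1$ specialization is needed here.

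For a combinatorial companion, I would use a Glaisher-type map. Write each special part $(rk)_{rl}$ as $r$ copies of $k_l$; the color $l\le k$ is legitimate, and $k_l$ now occurs at least $r$ times. In the reverse direction, a part $k_l$ of multiplicity $\mu$ is recorded via $\mu=qr+s$ with $0\le s<r$, and its $q$ groups of $r$ copies are merged into special parts $(rk)_{rl}$, applied iteratively as in Glaisher's bijection. The main obstacle there is keeping the count $j$ of distinct special or repeated parts invariant, rather than merely the total; the generating-function argument sidesteps this.
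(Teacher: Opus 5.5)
Your generating-function computation is correct and is essentially the paper's analytic proof. The paper additionally tracks the number of parts, with $z$ for parts and $w$ for $j$, obtaining $\prod_{m\ge1}\bigl((1-(1-w)zq^{rm})/(1-zq^{m})\bigr)^m$ for $\mathcal{O}$ and the same with $z^r q^{rm}$ in the numerator for $\mathcal{D}$, exactly as you anticipated, and then sets that variable to $1$ to reach your common product.

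Your combinatorial aside is not yet a bijection, as you yourself flag. The forward map leaves non-special parts of multiplicity $\ge r$ untouched, and iterating the merge in reverse changes $j$. The paper resolves this by splitting $\lambda=\lambda^{r\nmid}\cup\lambda^{r\mid}$, applying Glaisher's map only to the non-special parts $\lambda^{r\nmid}$ and a single $r$-fold split $\psi$ to the special parts. It inverts by separating each multiplicity into its largest multiple of $r$, which is merged once, and its residue, which is sent through inverse Glaisher.
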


\begin{proof}[Analytic Proof] Let $r\geq 2$. For $k\geq 0$, denote by $\mathcal{O}_{j,r}(k,m)$, respectively $\mathcal{D}_{j,r}(k,m)$,  the subset of partitions in $\mathcal{O}_{j,r}(m)$, respectively $\mathcal{D}_{j,r}(m)$, with $k$ parts. We start with the trivariate generating functions for the sequences $\{|\mathcal{O}_{j,r}(k,m)|\}$ and $\{|\mathcal{D}_{j,r}(k,m)|\}$: 
$$\mathcal{O}_r(z,w,q):=
\sum_{m=0}^\infty\sum_{k=0}^\infty\sum_{j=0}^\infty|\mathcal{O}_{j,r}(k,m)|z^kw^jq^m,$$ 
and 
$$\mathcal{D}_r(z,w,q):=\sum_{m=0}^\infty\sum_{k=0}^\infty\sum_{j=0}^\infty|\mathcal{D}_{j,r}(k,m)|z^kw^jq^m.$$ 
We have  
\begin{align*}
\mathcal{O}_r(z,w,q)
&=\prod_{m=1}^\infty(1+w\sum_{i=1}^{\infty}(zq^{rm})^i)^m\cdot
\prod_{m=1}^\infty\frac{1}{(1-zq^{rm})^{(r-1)m}}\cdot\prod_{\substack{m=1\\ r\nmid m}}^\infty\frac{1}{(1-zq^m)^m}   \\  
&=\prod_{m=1}^\infty\left(1+\frac{wzq^{rm}}{1-zq^{rm}}\right)^m\cdot \prod_{m=1}^\infty\left(\frac{1-zq^{rm}}{1-zq^m}\right)^m\\
&=\prod_{m=1}^{\infty}\left(\frac{1-(1-w)zq^{rm}}{1-zq^{m}}\right)^m
\end{align*} 
and  
\begin{align*}
\mathcal{D}_r(z,w,q)
&=\prod_{m=1}^\infty\Big(\sum_{i=0}^{r-1}(zq^{m})^i+w\sum_{i=r}^{\infty}(zq^{m})^i\Big)^m\\
&=\prod_{m=1}^\infty\Big(\frac{1-z^rq^{rm}}{1-zq^m}+\frac{wz^rq^{rm}}{1-zq^m}\Big)^m\\
&=\prod_{m=1}^{\infty}\left(\frac{1-(1-w)z^rq^{rm}}{1-zq^{m}}\right)^m.
\end{align*} 
Hence, we find that
$$\sum_{m=0}^\infty\sum_{j=0}^\infty|\mathcal{O}_{j,r}(m)|w^jq^m
=\mathcal{O}_r(1,w,q)=\prod_{m=1}^{\infty}\left(\frac{1-(1-w)q^{rm}}{1-q^{m}}\right)^m$$
is equal to
$$\sum_{m=0}^\infty\sum_{j=0}^\infty|\mathcal{D}_{j,r}(m)|w^jq^m
=\mathcal{D}_r(1,w,q)
=\prod_{m=1}^{\infty}\left(\frac{1-(1-w)q^{rm}}{1-q^{m}}\right)^m.$$
\end{proof}
\begin{proof}[Combinatorial Proof]

We first establish a bijection $$\varphi_0:\mathcal O_{0,r}(m)\to \mathcal D_{0,r}(m).$$ The bijection $\varphi_0$ is closely related to Glaisher's bijection used to prove that the number of partitions of $m$ with no part divisible by $r$ equals the number of partitions of $m$ with no part occurring more than $r-1$ times. Let $\lambda\in \mathcal O_{0,r}(m)$ be an $n$-color partition such that if $a_i$ is a part of $\lambda$ of size $a$ and subscript $i$, $1\leq i\leq a$, then $r\nmid \gcd(a,i)$. Denote by $m_\lambda(a_i)$ the multiplicity of $a_i$ in $\lambda$, that is, the number of times $a_i$ occurs as a part in $\lambda$. We write $m_\lambda(a_i)$ in base $r$, $$m_\lambda(a_i)=\sum_{j\geq 0}s_jr^j,$$ with $0\leq s_j<r$ for all $j\geq 0$. The partition $\varphi_0(\lambda)$ is obtained from $\lambda$ as follows:  for each part $a_i$ with $m_\lambda(a_i)\geq r$ we replace the multiset of all parts equal to $a_i$ by the multiset consisting of $s_j$ parts of size $ar^j$ and subscript $ir^j$ for each  $j\geq 0$. Then $\varphi_0(\lambda)\in  \mathcal D_{0,r}(m)$. For the inverse transformation, let $\mu\in  \mathcal D_{0,r}(m)$. The partition $\varphi_0^{-1}(\mu)$ is obtained from $\mu$ as follows: for each part $a_i$ in $\mu$ with $r\mid \gcd(a,i)$, let $t\geq 1$ be such that $r^t\mid \gcd(a,i)$ and $r^{t+1}\nmid \gcd(a,i)$. Each such part $a_i$ is replaced by $r^t$ parts of size $\displaystyle\frac{a}{r^t}$ and subscript $\displaystyle\frac{i}{r^t}$. Then $\varphi_0^{-1}(\mu)\in \mathcal O_{0,r}(m)$.\smallskip

Next, let $j\geq 1$. We define a bijection $$\varphi_j:\mathcal O_{j,r}(m)\to \mathcal D_{j,r}(m).$$  For the remainder of the article we identify a partition with its multiset of parts and use multiset operations on  multisets of parts of partitions. 

Let $\lambda\in \mathcal O_{j,r}(m)$. We write $\lambda=\lambda^{r\nmid}\cup \lambda^{r\mid}$,   where $\lambda^{r\nmid}\in \mathcal O_{0,r}(m-|\lambda^{r\mid}|)$ and $\lambda^{r\mid}$ is an $n$-color partition such that if $a_i$ is a part of $\lambda^{r\mid}$ then $r\mid \gcd(a,i)$. Notice that $\lambda^{r\mid}$ has $j$ different parts (possibly repeated). Let $\psi(\lambda^{r\mid})$ be the $n$-color partition obtained from $\lambda^{r\mid}$ as follows: each part $a_i$ in $\lambda^{r\mid}$ is replaced by $r$ parts of size $\displaystyle \frac{a}{r}$ and subscript $\displaystyle \frac{i}{r}$.
 We define 
 $$\varphi_j(\lambda):=\varphi_0(\lambda^{r\nmid}) \cup \psi( \lambda^{r\mid}) \in \mathcal D_{j,r}(m).$$ 
 For the inverse transformation, let $\mu\in \mathcal D_{j,r}(m)$. We write $\mu=\mu^{<r} \cup \mu^r$, where $\mu^{<r}\in \mathcal D_{0,r}(m-|\mu^r|)$ and $\mu^r$ is an $n$-color partition with all parts having multiplicity divisible by $r$. Then 
 $$\varphi_j^{-1}(\mu):=\varphi_0^{-1}(\mu^{<r})\cup \psi^{-1}(\mu^r),$$
 where $\psi^{-1}(\mu^r)$ is the $n$-color partition obtained from $\mu^r$ as follows: for each different part $a_i$ in $\mu^r$, we replace the multiset of all parts $a_i$ by the multiset consisting of $\ds\frac{m_{\mu^r}(a_i)}{r}$ parts all of size $ra$ and subscript $ri$.
\end{proof}

    \begin{remark}
         The case $j=0, r=2$ appears as \cite[Corollary 4]{NR26}. The combinatorial proof of \cite[Theorem 5]{NR26} is similar to the proof above. 
    \end{remark}

\section{$n$-color version of a Beck-type identity}

\begin{theorem} \label{B1}
For $r\geq2$ and $m,j\geq0$, we have
\begin{equation}\label{eqnBG}
(j+1)\left|\mathcal{O}_{j+1,r}(m)\right|-j\left|\mathcal{O}_{j,r}(m)\right|=\frac{1}{r-1}\cdot\Big(\sum_{\lambda\in\mathcal{O}_{j,r}(m)}\ell(\lambda)-\sum_{\lambda\in\mathcal{D}_{j,r}(m)}\ell(\lambda)\Big),
\end{equation}
 where $\ell(\lambda)$ is the number of parts of $\lambda$.
\end{theorem}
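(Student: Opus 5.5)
The plan is to work analytically with the trivariate generating functions $\mathcal{O}_r(z,w,q)$ and $\mathcal{D}_r(z,w,q)$ computed in the analytic proof of Theorem~\ref{F}. Write
$$F(w,q):=\mathcal{O}_r(1,w,q)=\mathcal{D}_r(1,w,q)=\prod_{m=1}^{\infty}\left(\frac{1-(1-w)q^{rm}}{1-q^{m}}\right)^m .$$
Both sides of \eqref{eqnBG} will be expressed as coefficients of $w^jq^m$ in series built from $F$, and the two series will be shown to coincide.

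For the right-hand side, observe that $\sum_{\lambda\in\mathcal{O}_{j,r}(m)}\ell(\lambda)$ is the coefficient of $w^jq^m$ in $\partial_z\mathcal{O}_r(z,w,q)\big|_{z=1}$, and similarly for $\mathcal{D}$. Logarithmic differentiation of the product formulas gives, at $z=1$,
$$\frac{\partial_z\mathcal{O}_r}{\mathcal{O}_r}\Big|_{z=1}=\sum_{m\ge1} m\Big(\frac{q^m}{1-q^m}-\frac{(1-w)q^{rm}}{1-(1-w)q^{rm}}\Big),$$
$$\frac{\partial_z\mathcal{D}_r}{\mathcal{D}_r}\Big|_{z=1}=\sum_{m\ge1} m\Big(\frac{q^m}{1-q^m}-\frac{r(1-w)q^{rm}}{1-(1-w)q^{rm}}\Big).$$
The only difference is the factor $r$ coming from $z^r$ in $\mathcal{D}_r$. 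Since both functions equal $F$ at $z=1$, subtracting gives
$$\frac{1}{r-1}\Big(\partial_z\mathcal{O}_r-\partial_z\mathcal{D}_r\Big)\Big|_{z=1}=(1-w)\,F(w,q)\,S(w,q),\qquad S:=\sum_{m\ge1}\frac{m\,q^{rm}}{1-(1-w)q^{rm}}.$$

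For the left-hand side, $\sum_{j\ge0}\big((j+1)|\mathcal{O}_{j+1,r}(m)|-j|\mathcal{O}_{j,r}(m)|\big)w^j$ is the coefficient of $q^m$ in $\partial_wF-w\,\partial_wF=(1-w)\partial_wF$. Logarithmic differentiation in $w$ gives
$$\partial_w\log F=\sum_{m\ge1}m\,\frac{q^{rm}}{1-(1-w)q^{rm}}=S,$$
so $(1-w)\partial_wF=(1-w)FS$. This matches the expression for the right-hand side, and comparing coefficients of $w^jq^m$ yields \eqref{eqnBG}.

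The main care needed is bookkeeping: checking that both derivatives are legitimate as formal power series in $q$ with polynomial coefficients in $w$ and $z$ (each coefficient of $q^m$ is a polynomial), and that the $z$-derivative really counts total parts. No step looks genuinely hard analytically.

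A combinatorial proof, if desired, would be harder. One would use the bijections $\varphi_j$ of Theorem~\ref{F} and track how $\ell$ changes. Under $\psi$, each part divisible by $r$ becomes $r$ parts, a gain of $r-1$ per part. Under $\varphi_0$, the lengths are not preserved either, so the difference of lengths has to be accounted for by marking one distinct part of the form $(ra)_{ri}$ in $\mathcal{O}_{j+1,r}$. Constructing that marking is the step I expect to be the obstacle.
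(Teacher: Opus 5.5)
Your proposal is correct and is essentially the paper's analytic proof. You differentiate $\mathcal{O}_r(z,w,q)$ and $\mathcal{D}_r(z,w,q)$ logarithmically in $z$ at $z=1$ and use $\mathcal{O}_r(1,w,q)=\mathcal{D}_r(1,w,q)$ to obtain $(r-1)(1-w)F\,S$, which you then match with $(1-w)\partial_w F=(1-w)F\,S$. The paper also gives a combinatorial proof, with Yang-style sets $\mathcal{O}_{\lambda,r,a_i}(m)$ for $j=0$ and the decomposition $\lambda=\lambda^{r\nmid}\cup\lambda^{r\mid}$ for $j\ge1$; your sketch leaves that open, but the analytic argument alone is complete.
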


\begin{proof}[Analytic Proof] We begin by observing that
$$\sum_{m=0}^\infty\sum_{j=0}^\infty \Big(\sum_{\lambda\in\mathcal{O}_{j,r}(m)}\ell(\lambda)-\sum_{\lambda\in\mathcal{D}_{j,r}(m)}\ell(\lambda)\Big)w^jq^m  
= \left.\frac{\del}{\del z}\right|_{z=1}\!\!\!\!\!\!\bigl(\mathcal{O}_r(z,w,q)-\mathcal{D}_r(z,w,q)\bigr).$$ 
Using logarithmic differentiation, we find
\begin{align*}
\left.\frac{\del}{\del z}\right|_{z=1}\!\!\!\!\!\!\mathcal{O}_r(z,w,q)
&= \mathcal{O}_r(1,w,q)\sum_{m=1}^\infty m\left(
-\frac{(1-w)q^{rm}}{1-(1-w)q^{rm}}+\frac{q^{m}}{1-q^{m}}\right)
\end{align*}
and
\begin{align*}
\left.\frac{\del}{\del z}\right|_{z=1}\!\!\!\!\!\!\mathcal{D}_r(z,w,q)
&= \mathcal{D}_r(1,w,q)\sum_{m=1}^\infty m\left(
-\frac{r(1-w)q^{rm}}{1-(1-w)q^{rm}}+\frac{q^{m}}{1-q^{m}}\right).
\end{align*}
Thus, recalling that  $\mathcal{O}_r(1,w,q)=\mathcal{D}_r(1,w,q)$ by \eqref{FTid}, it follows  that
$$
\left.\frac{\del}{\del z}\right|_{z=1}\!\!\!\!\!\!
\bigl(\mathcal{O}_r(z,w,q)-\mathcal{D}_r(z,w,q)\bigr)=
(r-1)(1-w)\mathcal{O}_r(1,w,q)
\sum_{m=1}^\infty 
\frac{mq^{rm}}{1-(1-w)q^{rm}},$$
leading to
\begin{align}\label{LHSBG}
\frac{1}{r-1}\sum_{m=0}^\infty\sum_{j=0}^\infty \Big(\sum_{\lambda\in\mathcal{O}_{j,r}(m)}\ell(\lambda)&-\sum_{\lambda\in\mathcal{D}_{j,r}(m)}\ell(\lambda)\Big)w^jq^m 
\nonumber\\
&= (1-w)\mathcal{O}_r(1,w,q)
\sum_{m=1}^\infty 
\frac{mq^{rm}}{1-(1-w)q^{rm}}.
\end{align}
On the other hand, again by logarithmic differentiation,
\begin{align}\label{RHSBG}\sum_{m=0}^\infty\sum_{j=0}^\infty
\Big((j+1)\left|\mathcal{O}_{j+1,r}(m)\right|&-j\left|\mathcal{O}_{j,r}(m)\right|\Big)w^jq^m\nonumber\\
&=(1-w) \,
\frac{\del}{\del w}\bigl(\mathcal{O}_r(1,w,q)\bigr)\nonumber\\
&=(1-w)\,\mathcal{O}_r(1,w,q)\sum_{m=1}^{\infty}
\frac{mq^{rm}}{1-(1-w)q^{rm}}.
\end{align}
Comparing \eqref{LHSBG} with \eqref{RHSBG} reveals that both sides are identical, thus completing the proof.
\end{proof}

\begin{proof}[Combinatorial Proof] We first prove the case $j=0$. The proof is similar to that in \cite[Section 3]{Yang19}. 

For a non-negative integer $u$, if $u=\sum_{j\geq 0}a_j r^j$ with $0\leq a_j<r$ is the representation of $u$ in base $r$, let $p_r(u):=\sum_{j\geq 0}a_j$ be the sum of the digits of $u$ in base $r$. Then the bijection $\varphi_0$ defined in the combinatorial proof of Theorem \ref{F}, shows that 
\begin{align*} \sum_{\lambda\in\mathcal{O}_{0,r}(m)}\ell(\lambda)-\sum_{\lambda\in\mathcal{D}_{0,r}(m)}\ell(\lambda)& =\sum_{\lambda\in\mathcal{O}_{0,r}(m)}\Big(\ell(\lambda)-\varphi_0(\lambda)\Big)\\ & =\sum_{\lambda\in\mathcal{O}_{0,r}(m)}\sum_{a_i\in \lambda}\Big(m_\lambda(a_i)-p_r(m_\lambda(a_i))\Big). 
\end{align*}

As in \cite{Yang19}, for each $\lambda\in \mathcal O_{0,r}(m)$ and each $a_i\in \lambda$, we define a  subset $\mathcal O_{\lambda,r,a_i}(m)\subseteq \mathcal O_{1,r}(m)$ as follows. Let $M(a_i)=\lfloor \log_r(m_\lambda(a_i))\rfloor$. For $1\leq s\leq M(a_i)$, $1\leq t\leq \left\lfloor \frac{m_\lambda(a_i)}{r^s}\right\rfloor$, let $\lambda^{(a_i, s, t)}$ be the partition obtained from $\lambda$ by replacing $tr^s$ parts equal to $a_i$ by $t$ parts equal to $(ar^s)_{ir^s}$.  Clearly, $(ar^s)_{ir^s}$ is  the only part in $\lambda^{(a_i, s, t)}$ such that both the size and the subscript are divisible by $r$. Let 
$$\mathcal O_{\lambda,r, a_i}(m):=\Big\{\lambda^{(a_i, s, t)} \mid 1\leq s\leq M(a_i), 1\leq t\leq \left\lfloor \frac{m_\lambda(a_i)}{r^s}\right\rfloor\Big\}.$$ 
If $m_\lambda(a_i)<r$, then $\mathcal O_{\lambda,r,a_i}(m)=\emptyset$. 
Moreover, it follows from the definition that the sets $\mathcal O_{\lambda,r, a_i}(m)$ are disjoint as $\lambda\in \mathcal O_{0,r}(m)$ and $a_i\in \lambda$ with $m_\lambda(a_i)\geq r$ vary.
If $m_\lambda(a_i)\geq r$ and $(s,t)\neq (s',t')$, then $\lambda^{(a_i, s, t)}\neq \lambda^{(a_i, s', t')}$. 

The counting argument in the proof of \cite[Lemma 3.2]{Yang19} shows that $$|\mathcal O_{\lambda,r, a_i}(m)|=\frac{m_\lambda(a_i)-p_r(m_\lambda(a_i))}{r-1}.$$ It remains to prove that $$\mathcal O_{1,r}(m)\subseteq \bigcup_{\lambda\in \mathcal O_{0,r}(m)}\left(\bigcup_{a_i\in \lambda}\mathcal O_{\lambda,r, a_i}(m)\right).$$   Let $\mu \in \mathcal O_{1,r}(m)$ and suppose that $a_i$ is the only part in $\mu$ such that $r$ divides both the size and the subscript of the part. Let $v\geq 1$ be such that $r^v\mid \gcd(a,i)$ and $r^{v+1}\nmid \gcd(a,i)$. Let $\lambda$ be the partition obtained from $\mu$ by replacing each part equal to $a_i$ by $r^v$ parts of size $b:=\frac{a}{r^v}$ and subscript $k:=\frac{i}{r^v}$. Clearly, $\lambda \in \mathcal O_{0,r}(m)$, the pair $(\lambda,b_k)$ is uniquely determined by $\mu$, and   $\mu \in \mathcal O_{\lambda, r, b_k}(m)$. Therefore, 
$$|\mathcal O_{1,r}(m)|=\frac{1}{r-1}\Big(\sum_{\lambda\in\mathcal{O}_{0,r}(m)}\ell(\lambda)-\sum_{\lambda\in\mathcal{D}_{0,r}(m)}\ell(\lambda)\Big).$$
Next, let $j\geq 1$. The proof in this case is similar to that of \cite[Theorem 1.2]{BW23}. We set $$b_{j,r}(m):=\sum_{\lambda\in\mathcal{O}_{j,r}(m)}\ell(\lambda)-\sum_{\lambda\in\mathcal{D}_{j,r}(m)}\ell(\lambda).$$
With the notation in the combinatorial proof of Theorem \ref{F},  we obtain 
\begin{align*}b_{j,r}(m)& = \sum_{\lambda\in\mathcal{O}_{j,r}(m)} \Big( \ell(\lambda)-\ell(\varphi_j(\lambda))\Big)\\ 
& =  \sum_{\lambda\in\mathcal{O}_{j,r}(m)}\Big(\ell(\lambda^{r\nmid})-\ell(\varphi_0(\lambda^{r\nmid}))+ \ell(\lambda^{r\mid})-\ell(\psi(\lambda^{r\mid}))\Big).\end{align*} 
To rewrite the sum above, we define $\mathcal A_{j,r}(s)$ to be the set of $n$-color partitions $\beta$ of $s$ in which all parts have both the size and the subscript divisible by $r$ and such that $\beta$ has exactly $j$ different parts (possibly repeated). Then 
\begin{align*}b_{j,r}(m)& = \sum_{s\geq 0}\sum_{\beta\in \mathcal A_{j,r}(s)}\sum_{\alpha\in \mathcal O_{0,r}(m-s)}\Big(\ell(\alpha)-\ell(\varphi_0(\alpha))+\ell(\beta)-\ell(\psi(\beta)) \Big).\end{align*} 
Note that the set $\mathcal A_{j,r}(s)$ may be empty for certain values of $s$.
From the combinatorial proof of Theorem \ref{F}, it follows that
$$\ell(\beta)-\ell(\psi(\beta))=(1-r)\ell(\beta).$$ 
Moreover, from the combinatorial proof of case $j=0$ of this theorem, we have $$\sum_{\alpha\in \mathcal O_{0,r}(m-s)}\Big(\ell(\alpha)-\ell(\varphi_0(\alpha))\Big)=(r-1)|\mathcal O_{1,r}(m-s)|.$$ Thus $$b_{j,r}(m) = \sum_{s\geq 0}\sum_{\beta\in \mathcal A_{j,r}(s)}(r-1)\Big(|\mathcal O_{1,r}(m-s)|-\ell(\beta)|\mathcal O_{0,r}(m-s)|\Big). $$ If $\mathcal A_{j,r}(s)\neq \emptyset$, fix $\beta \in \mathcal A_{j,r}(s)$. We define a mapping $\zeta_\beta$ on the set of $n$-color partitions of $m-s$ by $$\zeta_\beta(\mu):=\mu \cup \beta.$$ 
Restricted to the set $\mathcal O_{0,r}(m-s)$, the mapping $\zeta_\beta$ is a bijection between $\mathcal O_{0,r}(m-s)$  and $\mathcal O^{\beta}_{j,r}(m):=\{\lambda \in \mathcal O_{j,r}(m)\mid \lambda^{r\mid}=\beta\}$.

Next we restrict $\zeta_\beta$ to the set $\mathcal O_{1,r}(m-s)$. Let $\mu\in \mathcal O_{1,r}(m-s)$. Suppose that the only part of $\mu^{r\mid}$ is $b_u$.  Then $\zeta_\beta(\mu)^{r\mid}=\beta\cup \mu^{r\mid}.$ We have two cases. 

\begin{enumerate}

\item[i)] If $b_u\in \beta$, then $\zeta_\beta(\mu)$ belongs to the set
$$\mathcal O^{\hat\beta}_{j,r}(m):=\{\lambda \in \mathcal O_{j,r}(m)\mid \beta \subset \lambda^{r\mid},  \lambda^{r\mid}\setminus\beta \text{ has one part size (possibly repeated)}\}.$$ 


\item[ii)] If $b_u\not\in \beta$, then $\zeta_\beta(\mu)$ belongs to the set 
$$\mathcal O^{\beta^*}_{j+1,r}(m):=\{\lambda \in \mathcal O_{j+1,r}(m)\mid \beta \subset \lambda^{r\mid},  \lambda^{r\mid}\setminus\beta \text{ has one part size (possibly repeated)}\}.$$

\end{enumerate}

Thus, $\zeta_\beta$ is a bijection between $\mathcal O_{1,r}(m-s)$ and $\mathcal O^{\hat\beta}_{j,r}(m)\cup \mathcal O^{\beta^*}_{j+1,r}(m)$.

We have 
\begin{equation}\label{bjr1}
b_{j,r}(m) = (r-1)\sum_{s\geq 0}\sum_{\beta\in \mathcal A_{j,r}(s)}\Big(|\mathcal O^{\hat\beta}_{j,r}(m)|+|\mathcal O^{\beta^*}_{j+1,r}(m) |-\ell(\beta)|\mathcal O^{\beta}_{j,r}(m)|\Big).
\end{equation}

Given a partition $\gamma\in \mathcal O_{j+1,r}(m)$, denote the different parts of $\gamma^{r\mid}$ by $(a_t)_{i_t}$, $1\leq t\leq j+1$, and let $m_t:=m_\gamma((a_t)_{i_t})$. For each $1\leq t\leq j+1$, denote by $^t\gamma$ the partition obtained from $\gamma^{r\mid}$ by removing all $m_t$ parts equal to $(a_t)_{i_t}$. Then $\gamma\in \mathcal O^{^t\gamma^*}_{j+1,r}(m)$. Therefore, $\gamma$ belongs to $j+1$ different sets 
$\mathcal O^{\beta^*}_{j+1,r}(m)$ and
\begin{equation}\label{bjr2}
\sum_{s\geq 0}\sum_{\beta\in \mathcal A_{j,r}(s)}|\mathcal O^{\beta^*}_{j+1,r}(m)|= (j+1)|\mathcal O_{j+1, r}(m)|.
\end{equation}
Each partition $\eta\in \mathcal O_{j,r}(m)$ occurs only in $\mathcal O^{\beta}_{j,r}(m)$ with $\beta=\eta^{r\mid}$. Hence, 
\begin{equation}\label{bjr3}
\sum_{s\geq 0}\sum_{\beta\in \mathcal A_{j,r}(s)}\ell(\beta)|\mathcal O^\beta_{j,r}(m)|= \sum_{\lambda\in \mathcal O_{j,r}(m)}\ell(\lambda^{r\mid})|\mathcal O^{\lambda^{r\mid}}_{j,r}(m)|.
\end{equation}

Next, assume that $\eta\in \mathcal O_{j,r}(m)$. Therefore $\eta\in \mathcal O^{\eta^{r\mid}}_{j,r}(m)$.  Denote the different parts of  $\eta^{r\mid}$ by $(a_t)_{i_t}$, $1\leq t\leq j$ and let $m_t:= m_\eta((a_t)_{i_t})$. For each $1\leq t\leq j$ and each $1\leq u \leq m_t-1$, let $^{u,t}\eta$ be the partition obtained from $\eta^{r\mid}$ by removing $u$ parts equal to $(a_t)_{i_t}$. Then $\eta\in \mathcal O^{\widehat{^{u,t}\eta}}_{j+1,r}(m)$.
It follows that $\eta\in \mathcal O^{\eta^{r\mid}}_{j,r}(m)$ belongs to $\ds\sum_{t=1}^j(m_t-1)=\ell(\eta^{r\mid})-j$ different sets $\mathcal O^{\hat\beta}_{j,r}(m)$.


Hence, together with \eqref{bjr3}, we find
\begin{align}\label{bjr4}
\sum_{s\geq 0}\sum_{\beta\in \mathcal A_{j,r}(s)}\Big(|\mathcal O^{\hat\beta}_{j,r}(m)|&-\ell(\beta)|\mathcal O^{\beta}_{j,r}(m)|\Big)\nonumber\\
& = \sum_{\lambda\in \mathcal O_{j,r}(m)}\Big((\ell(\lambda^{r\mid})-j)|\mathcal O^{\lambda^{r\mid}}_{j,r}(m)|-\ell(\lambda^{r\mid})|\mathcal O^{\lambda^{r\mid}}_{j,r}(m)|\Big)\nonumber\\ & =  -j\sum_{\lambda\in \mathcal O_{j,r}(m)}|\mathcal O^{\lambda^{r\mid}}_{j,r}(m)|\nonumber\\ 
& = -j|\mathcal O_{j,r}(m)|.
\end{align} 
Substituting \eqref{bjr2} and \eqref{bjr4} into \eqref{bjr1} completes the proof.
\end{proof}

\section{A second $n$-color Beck-type identity}

Let ${T}_{j,r}(m)$  denote the number of (distinct) parts with multiplicity greater than $r$ and less than $2r$ in all partitions in $\mathcal{D}_{j,r}(m)$.

\begin{theorem}\label{B2}
For $r\geq2$ and $m,j\geq0$, we have
\begin{equation}\label{eqnB2G}
{T}_{j+1,r}(m)-{T}_{j,r}(m)=\sum_{\lambda\in\mathcal{D}_{j,r}(m)}\bar{\ell}(\lambda)-\sum_{\lambda\in\mathcal{O}_{j,r}(m)}\bar{\ell}(\lambda),
\end{equation}
 where $\bar{\ell}(\lambda)$ is the number of distinct parts in $\lambda$.
\end{theorem}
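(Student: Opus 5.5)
The plan is an analytic proof that parallels the analytic proof of Theorem~\ref{B1}. The one change is that we use a variable $y$ marking \emph{distinct} parts instead of the variable $z$ marking all parts. Equality of the $y$-free specializations then comes from Theorem~\ref{F}.

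\emph{Generating function for $\mathcal{O}$.} Consider the trivariate generating functions in $y,w,q$ in which $y^{\bar\ell(\lambda)}$ records the number of distinct parts. In $\mathcal O$, a colored part $a_i$ with $r\nmid\gcd(a,i)$ contributes the factor $1+yq^a/(1-q^a)$. A part with $r\mid\gcd(a,i)$ contributes $1+wyq^a/(1-q^a)$. Since a part of size $rm$ has exactly $m$ colors divisible by $r$, we get
$$\mathcal O_r(y,w,q)=\prod_{m\ge1}\Big(1+\frac{yq^m}{1-q^m}\Big)^m\prod_{m\ge1}\left(\frac{1+\frac{wyq^{rm}}{1-q^{rm}}}{1+\frac{yq^{rm}}{1-q^{rm}}}\right)^m .$$

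\emph{Generating function for $\mathcal{D}$.} Each colored part of size $m$ contributes the factor $1+y\frac{q^m-q^{rm}}{1-q^m}+\frac{wyq^{rm}}{1-q^m}$, raised to the power $m$ over the colors. At $y=1$ both products reduce to $\prod_m\big((1-(1-w)q^{rm})/(1-q^m)\big)^m$, in agreement with Theorem~\ref{F}.

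\emph{Right-hand side of \eqref{eqnB2G}.} Take $\partial_y$ at $y=1$ by logarithmic differentiation. The $\mathcal D$ side gives, per $m$, the term $m\,\frac{q^m-(1-w)q^{rm}}{1-(1-w)q^{rm}}$. The $\mathcal O$ side gives $m\big(q^m+\frac{wq^{rm}}{1-(1-w)q^{rm}}-q^{rm}\big)$. Subtracting, the $\mathcal D$-minus-$\mathcal O$ term simplifies to
$$m\,(q^m-q^{rm})\,\frac{(1-w)q^{rm}}{1-(1-w)q^{rm}}.$$
So the generating function $\sum_{m,j}\big(\sum_{\mathcal D_{j,r}(m)}\bar\ell-\sum_{\mathcal O_{j,r}(m)}\bar\ell\big)w^jq^m$ equals $\mathcal O_r(1,w,q)\sum_m m(1-w)\frac{q^{(r+1)m}-q^{2rm}}{1-(1-w)q^{rm}}$.

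\emph{Left-hand side of \eqref{eqnB2G}.} Let $\mathcal T(w,q)=\sum_{j,m}T_{j,r}(m)w^jq^m$. A part with multiplicity in $(r,2r)$ occurs at least $r$ times, so $T_{0,r}(m)=0$. Hence $\sum_{j,m}(T_{j+1,r}(m)-T_{j,r}(m))w^jq^m=\frac{1-w}{w}\mathcal T(w,q)$. To compute $\mathcal T$, mark in each $\mathcal D$-factor the multiplicities $r+1,\dots,2r-1$ with an extra variable $x$, differentiate at $x=1$, and use logarithmic differentiation again. This gives
$$\mathcal T(w,q)=\mathcal D_r(1,w,q)\sum_m m\,\frac{w\sum_{i=r+1}^{2r-1}q^{im}}{(1-(1-w)q^{rm})/(1-q^m)} .$$
The telescoping identity $(1-q^m)\sum_{i=r+1}^{2r-1}q^{im}=q^{(r+1)m}-q^{2rm}$, together with $\mathcal D_r(1,w,q)=\mathcal O_r(1,w,q)$, shows that $\frac{1-w}{w}\mathcal T$ coincides with the expression obtained for the right-hand side. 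This proves the identity.

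The main obstacle is bookkeeping rather than any conceptual step. In the $\mathcal O$ factor one must correctly split the $rm$ colors of a part of size $rm$ into the $m$ colors divisible by $r$ and the remaining $(r-1)m$. One must also check the boundary term $T_{0,r}=0$ so that the shift in $j$ becomes multiplication by $(1-w)/w$. A combinatorial proof in the style of Yang and of Ballantine--Welch could be attempted afterwards, using $\varphi_j$ and tracking which distinct parts merge. However, I would present the generating-function argument first, since the algebra above closes cleanly.
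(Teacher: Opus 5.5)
Your proposal is correct and is essentially the paper's analytic proof. You use the same generating functions for $\mathcal{O}$ and $\mathcal{D}$ with a variable marking distinct parts, and the same auxiliary product marking multiplicities strictly between $r$ and $2r$, followed by the same logarithmic differentiation at the marking variable equal to $1$. Your explicit check that $T_{0,r}(m)=0$, which justifies the shift in $j$ as multiplication by $(1-w)/w$, fills in a step the paper leaves implicit.
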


\begin{proof}[Analytic Proof] Let $r\geq 2$. For $k\geq 0$, denote by $\widetilde{\mathcal{O}}_{j,r}(k,m)$ and $\widetilde{\mathcal{D}}_{j,r}(k,m)$ the subsets of partitions with $k$ distinct parts in $\mathcal{O}_{j,r}(m)$ and $\mathcal{D}_{j,r}(m)$, respectively.
We introduce the trivariate generating functions  
$$\widetilde{\mathcal{O}}_r(z,w,q):=
\sum_{m=0}^\infty\sum_{k=0}^\infty\sum_{j=0}^\infty|\widetilde{\mathcal{O}}_{j,r}(k,m)|z^kw^jq^m,$$ and 
$$\widetilde{\mathcal{D}}_r(z,w,q):=\sum_{m=0}^\infty\sum_{k=0}^\infty\sum_{j=0}^\infty|\widetilde{\mathcal{D}}_{j,r}(k,m)|z^kw^jq^m.$$ 
A direct expansion leads to
\begin{align*}
\widetilde{\mathcal O}_r(z,w,q)
&=\prod_{m=1}^\infty(1+wz\sum_{i=1}^{\infty}(q^{rm})^i)^m\cdot
\prod_{m=1}^\infty(1+z\sum_{i=1}^{\infty}(q^{rm})^i)^{(r-1)m}\cdot\prod_{\substack{m=1\\ r\nmid m}}^\infty(1+z\sum_{i=1}^{\infty}(q^{m})^i)^{m}   \\  
&=\prod_{m=1}^\infty\left(1+\frac{wzq^{rm}}{1-q^{rm}}\right)^m\cdot 
\prod_{m=1}^{\infty}\left(1+\frac{zq^{rm}}{1-q^{rm}}\right)^{-m}
\cdot \prod_{m=1}^{\infty}\left(1+\frac{zq^{m}}{1-q^{m}}\right)^{m}\\
&=\prod_{m=1}^{\infty}\left(\frac{1-(1-wz)q^{rm}}{1-(1-z)q^{rm}}\right)^m
\cdot \prod_{m=1}^{\infty}\left(\frac{1-(1-z)q^{m}}{1-q^{m}}\right)^{m},
\end{align*} 
and  
\begin{align*}
\widetilde{\mathcal D}_r(z,w,q)
&=\prod_{m=1}^\infty\Big(1+z\sum_{i=1}^{r-1}(q^{m})^i+wz\sum_{i=r}^{\infty}(q^{m})^i\Big)^m\\
&=\prod_{m=1}^\infty\Big(1+\frac{z(q^m-q^{rm})}{1-q^m}+\frac{wzq^{rm}}{1-q^m}\Big)^m\\
&=\prod_{m=1}^{\infty}\left(\frac{1-(1-z)q^m-(1-w)zq^{rm}}{1-q^{m}}\right)^m.
\end{align*} 
In a similar spirit, we let ${\mathcal T}_{j,r}(k,m)$ be the subset of partitions in $\mathcal{D}_{j,r}(m)$ with exactly $k$ different parts with multiplicity greater than $r$ and less than $2r$. Then
\begin{align*}
{\mathcal T}_r(z,w,q)&:=
\sum_{m=0}^\infty\sum_{k=0}^\infty\sum_{j=0}^\infty|{\mathcal T}_{j,r}(k,m)|z^kw^jq^m\\
&=\prod_{m=1}^\infty\Big(\sum_{i=0}^{r-1}(q^{m})^i+wq^{rm}+wz\sum_{i=r+1}^{2r-1}(q^{m})^i+w\sum_{i=2r}^{\infty}(q^{m})^i\Big)^m\\
&=\prod_{m=1}^\infty\Big(\frac{1-q^{rm}}{1-q^m}+wq^{rm}+\frac{wz(q^{(r+1)m}-q^{2rm})}{1-q^m}+\frac{wq^{2rm}}{1-q^m}\Big)^m\\
&=\prod_{m=1}^{\infty}\left(\frac{1-(1-w)q^{rm}-w(1-z)(q^{(r+1)m}-q^{2rm})}{1-q^{m}}\right)^m.
\end{align*} 
Setting $z=1$ in these three generating functions, it is easy to check that  
\begin{equation}\label{ODT1}
\widetilde{\mathcal O}_r(1,w,q)
=\widetilde{\mathcal D}_r(1,w,q)
={\mathcal T}_r(1,w,q)
=\prod_{m=1}^{\infty}\left(\frac{1-(1-w)q^{rm}}{1-q^{m}}\right)^m.
\end{equation}
We have 
$$\sum_{m=0}^\infty\sum_{j=0}^\infty \Big(\sum_{\lambda\in\mathcal{D}_{j,r}(m)}\bar{\ell}(\lambda)-
\sum_{\lambda\in\mathcal{O}_{j,r}(m)}\bar{\ell}(\lambda)\Big)w^jq^m 
= \left.\frac{\del}{\del z}\right|_{z=1}\!\!\!\!\!\!\bigl(\widetilde{\mathcal D}_r(z,w,q)-\widetilde{\mathcal O}_r(z,w,q)\bigr).$$
Using logarithmic differentiation, we obtain
\begin{align*}\left.\frac{\del}{\del z}\right|_{z=1}\!\!\!\!\!\!\widetilde{\mathcal O}_r(z,w,q)
&=\widetilde{\mathcal O}_r(1,w,q)\sum_{m=1}^\infty m\left(
\frac{wq^{rm}}{1-(1-w)q^{rm}}-q^{rm}+q^{m}\right)
\end{align*}
and, similarly,
\begin{align*}
\left.\frac{\del}{\del z}\right|_{z=1}\!\!\!\!\!\!\widetilde{\mathcal D}_r(z,w,q)
&= \widetilde{\mathcal D}_r(1,w,q)
\sum_{m=1}^\infty \frac{m(q^m-(1-w)q^{rm})}{1-(1-w)q^{rm}}.
\end{align*}
Subtracting these two expressions and applying identity \eqref{ODT1}, we get
\begin{align}\label{RHSB2G}
\sum_{m=0}^\infty\sum_{j=0}^\infty \Big(\sum_{\lambda\in\mathcal{D}_{j,r}(m)}\bar{\ell}(\lambda)&-
\sum_{\lambda\in\mathcal{O}_{j,r}(m)}\bar{\ell}(\lambda)\Big)w^jq^m\nonumber\\
&=\widetilde{\mathcal O}_r(1,w,q)
\sum_{m=1}^{\infty} 
\left(\frac{m(q^m-q^{rm})}{1-(1-w)q^{rm}}-m(q^m-q^{rm})\right)\nonumber\\
&=(1-w)\,\widetilde{\mathcal O}_r(1,w,q)
\sum_{m=1}^\infty \frac{m(q^{(r+1)m}-q^{2rm})}{1-(1-w)q^{rm}}.
\end{align}
On the other hand, evaluating the derivative of ${\mathcal T}_r(z,w,q)$ at $z=1$ gives
\begin{align*}
\sum_{m=0}^\infty\sum_{j=0}^\infty T_{j,r}(m)w^jq^m
&=\left.\frac{\del}{\del z}\right|_{z=1}\!\!\!\!\!\!{\mathcal T}_r(z,w,q)={\mathcal T}_r(1,w,q)
\sum_{m=1}^{\infty}\frac{mw(q^{(r+1)m}-q^{2rm})}{1-(1-w)q^{rm}},
\end{align*}
which implies 
\begin{align}\label{LHSB2G}
\sum_{m=0}^\infty\sum_{j=0}^{\infty} ({T}_{j+1,r}(m)-{T}_{j,r}(m)) w^jq^m=(1-w)\,{\mathcal T}_r(1,w,q)
\sum_{m=1}^\infty \frac{m(q^{(r+1)m}-q^{2rm})}{1-(1-w)q^{rm}}.
\end{align}
Consequently, in view of \eqref{ODT1}, \eqref{RHSB2G} coincides with \eqref{LHSB2G}, which concludes the proof.
\end{proof}

\begin{proof}[Combinatorial Proof] Let $$b'_{j,r}(m):=\sum_{\lambda\in\mathcal{D}_{j,r}(m)}\bar{\ell}(\lambda)-\sum_{\lambda\in\mathcal{O}_{j,r}(m)}\bar{\ell}(\lambda)= \sum_{\lambda\in\mathcal{D}_{j,r}(m)}\Big(\bar{\ell}(\lambda)-\bar{\ell}(\varphi_j^{-1}(\lambda))\Big).$$ For this proof, we view the parts of an $n$-color partition as elements of the set of $n$-color integers 
$$\mathbb Z_c:=\{a_i \mid a, i\in \mathbb Z^+, 1\leq i\leq a\}.$$ 
We first prove the case $j=0$ of the theorem by adapting the ideas of  \cite[Theorem 1.7]{Yang19}. Let $$\mathbb Z_c^{r \nmid}:=\{b_k\in \mathbb Z_c \mid r \nmid \gcd(b,k)\}.$$ For ease of notation, if $a_i\in \mathbb Z_c$, we write $r^t(a_i)$ for the $n$-color integer of size $r^ta$ and subscript $r^ti$. 

Given  $b_k\in \mathbb Z_c^{r \nmid}$, we define the set $$F(b_k):=\{a_i\in \mathbb Z_c \mid a_i=r^t(b_k),  t\geq 0\}.$$ 
The sets $\{F(b_k)\}_{b_k\in \mathbb Z_c^{r \nmid}}$ 
form a set partition of $\mathbb Z_c$. Hence, for any $n$-color partition $\lambda$, we have 
$$\bar\ell(\lambda)=\sum_{b_k\in \mathbb Z_c^{r \nmid}}|F(b_k)\cap \lambda|.$$ 
Let $\lambda\in \mathcal D_{0,r}(m)$. By the definition of $\varphi_0^{-1}$, it follows that $F(b_k)\cap \varphi_0^{-1}(\lambda)=\emptyset$ if and only if $F(b_k)\cap \lambda=\emptyset$. Moreover, since $\varphi_0^{-1}(\lambda)\in \mathcal O_{0,r}(m)$, we have $F(b_k)\cap \varphi_0^{-1}(\lambda)\in \{\emptyset,\{b_k\}\}$, that is, $F(b_k)\cap \varphi_0^{-1}(\lambda)$ is empty or equal to the singleton $\{b_k\}$. Hence, 
$$\bar\ell(\lambda)-\bar\ell(\varphi_0^{-1}(\lambda))=\sum_{\substack{b_k\in \mathbb Z_c^{r \nmid}\\ F(b_k)\cap \lambda\neq \emptyset}}\Big(|F(b_k)\cap \lambda|-1 \Big).$$
Let $\mathcal T_{1,r}(m):=\mathcal T_{1,r}(1,m)$, which is the subset of $n$-color partitions  of  $\lambda \in \mathcal D_{1,r}(m)$  such that $\lambda$ has a single part with multiplicity greater than $r$ and less than $2r$.  Clearly, $|\mathcal T_{1,r}(m)|=T_{1,r}(m)$.

For each $\lambda\in \mathcal D_{0,r}(m)$ and $b_k\in \mathbb Z_c^{r\nmid}$ such that $F(b_k)\cap \lambda\neq\emptyset$, we construct a subset $\mathcal T_{\lambda, r, b_k}(m)\subseteq \mathcal T_{1,r}(m)$ of size $|F(b_k)\cap \lambda|-1$ as follows. Suppose that $|F(b_k)\cap \lambda|=p$ and that the different parts in $\lambda$ that are elements of $F(b_k)$ are $r^{t_i}(b_k)$ with $0\leq t_1<t_2<\cdots < t_p$. For each integer $1\leq i\leq p$, we have $m_\lambda(r^{t_i}(b_k))\leq r-1$.
Now let $1\leq s\leq p-1$. We construct a partition $\tau_{b_k}^s$ from $\lambda$ by removing a single part equal to $r^{t_{s+1}}(b_k)$  and inserting $r$ parts equal to $r^{t_s}(b_k)$ and, for each integer $i\in [t_s+1, t_{s+1}]$, $r-1$ parts equal to $r^{i}(b_k)$.

As in \cite{Yang19}, we see that $|\tau^s_{b_k}|=|\lambda|=m$. Moreover, in the partition $\tau_{b_k}^s$, the part $r^{t_s}(b_k)$ has multiplicity greater than $r$ and less than $2r$, while all other parts have multiplicity less than $r$. Thus $\tau^s_{b_k}\in \mathcal T_{1,r}(m)$. Moreover, if $s\neq s'$, then $\tau^s_{b_k}\neq \tau^{s'}_{b_k}$. Define 
$$\mathcal T_{\lambda, r, b_k}(m):=\{\tau^s_{b_k} \mid 1\leq s \leq p-1\}.$$ 
By construction, if $b_k\neq b'_{k'}$, then $\mathcal T_{\lambda, r, b_k}(m)\cap \mathcal T_{\lambda, r, b'_{k'}}(m)=\emptyset$. 

It remains to show that $$\mathcal T_{1,r}(m)\subseteq \bigcup_{\lambda\in \mathcal D_{0,r}(m)}\Big(\bigcup_{\substack{b_k\in \mathbb Z_c^{r \nmid}\\ F(b_k)\cap \lambda\neq \emptyset}} \mathcal T_{\lambda, r, b_k}(m)\Big)$$ and  the sets $\mathcal T_{\lambda, r, b_k}(m)$ are disjoint. Let $\tau \in \mathcal T_{1,r}(m)$ and assume that the part of $\tau$ with multiplicity greater than $r$ but less than $2r$ is $r^t(b_k)$ with $b_k\in \mathbb Z_c^{r\nmid}$. From $\tau$, we create a partition $\lambda \in \mathcal D_{0,r}(m)$ using the following algorithm.

\begin{itemize}

\item[(1)] Replace $r$ parts equal to $r^t(b_k)$ with a single part equal to $r^{t+1}(b_k)$. Let $\lambda$ be the resulting partition, and set the counter $i=1$.

\item[(2)] While $\lambda$ contains $r$ parts equal to $r^{t+i}(b_k)$, we modify $\lambda$ by replacing $r$ parts equal to $r^{t+i}(b_k)$ with a single part equal to $r^{t+i+1}(b_k)$ and then increase $i$ by 1.
Otherwise, we stop.
\end{itemize}

Since the partition $\lambda$ has a finite number of parts, step (2) will run a finite number of times and eventually produce the partition $\lambda \in \mathcal{D}_{0,r}(m)$. 
Moreover, the set $F(b_k)\cap \lambda$ is non-empty  because it contains $r^t(b_k)$.
We have $\tau \in \mathcal T_{\lambda, r, b_k}(m)$. 
 The algorithm above shows that $\tau\in \mathcal T_{1,r}(m)$ uniquely determines $\lambda$ and $b_k$.  Thus, if $(\lambda, b_k)\neq (\lambda', b'_{k'})$, the set $\mathcal{T}_{\lambda, r, b_k}(m)\cap \mathcal{T}_{\lambda', r, b'_{k'}}(m)\subseteq \mathcal T_{1,r}(m)$ must be empty. Hence, the sets $\mathcal T_{\lambda, r, b_k}(m)$ are disjoint.  
This completes the combinatorial proof for $b'_{0,r}(m)=T_{1,r}(m)$.

Next, let $j\geq 1$. The combinatorial proof of this case is similar to that of \cite[Theorem 6]{BW21}. We adopt the notation from the combinatorial proof of Theorem \ref{F} and write $\lambda\in \mathcal D_{j,r}(m)$ as $\lambda= \lambda^{<r}\cup \lambda^r$. We have 
$$b'_{j,r}(m)=\sum_{\lambda\in \mathcal D_{j,r}(m)}\Big(\bar\ell(\lambda)-\bar\ell(\varphi_j^{-1}(\lambda))\Big).$$ 
Let $\lambda \in \mathcal{D}_{j,r}(m)$ and denote by $\bar g(\lambda)$ the number of different parts $a_i$ in $\lambda$ with $m_\lambda(a_i)\geq r$ and $r\nmid m_\lambda(a_i)$. It follows that 
$$\bar\ell(\lambda)=\bar\ell(\lambda^{<r})+\bar\ell(\lambda^r)-\bar g(\lambda).$$ 
On the other hand, $\varphi_j^{-1}(\lambda)=\varphi_0^{-1}(\lambda^{<r})\cup \psi^{-1}(\lambda^r)$ and the union is disjoint. Moreover, $\bar\ell(\psi^{-1}(\lambda^r))=\bar\ell(\lambda^r)$. Therefore, 
$$\bar\ell(\varphi_j^{-1}(\lambda))=\bar\ell(\varphi_0^{-1}(\lambda^{<r}))+\bar\ell(\lambda^{r}).$$
We denote by $\mathcal B_{j,r}(s)$ the set of partitions in $\mathcal D_{j,r}(s)$ with $j$ different parts, all occurring with multiplicity divisible by $r$. The set $\mathcal B_{j,r}(s)$ may be empty for certain values of $s$. We have 
\begin{align}
b'_{j,r}(m)& \notag =\sum_{\lambda\in \mathcal D_{j,r}(m)}
\left(\bar\ell(\lambda^{<r})-\bar\ell(\varphi_0^{-1}(\lambda^{<r}))-\bar g(\lambda)\right)\\ 
& \notag =\sum_{s\geq 0}\sum_{\beta\in \mathcal B_{j,r}(s)} \sum_{\alpha\in \mathcal D_{0,r}(m-s)}\left( \bar\ell(\alpha)-\bar\ell(\varphi_0^{-1}(\alpha))\right)-\sum_{\lambda\in \mathcal D_{j,r}(m)} \bar g(\lambda)\\
\notag & =\sum_{s\geq 0}\sum_{\beta\in \mathcal B_{j,r}(s)}b'_{0,r}(m-s)-\sum_{\lambda\in \mathcal D_{j,r}(m)} \bar g(\lambda)\\ 
\label{b'} & = \sum_{s\geq 0}\sum_{\beta\in \mathcal B_{j,r}(s)}T_{1,r}(m-s)-\sum_{\lambda\in \mathcal D_{j,r}(m)} \bar g(\lambda).
\end{align}

The double sum in \eqref{b'} is equal to the  number of pairs $(\mu, \beta)$ with $\beta\in \mathcal B_{j,r}(s)$ for some $s$ and $\mu \in \mathcal T_{1,r}(m-s)$. Given such a pair $(\mu, \beta)$, we consider the $n$-color partition $\eta:=\mu\cup \beta$. Suppose that the only part of $\mu$ with multiplicity at least $r$ is $b_u$. Then $r<m_\mu(b_u)<2r$. We have two cases. 

\begin{itemize}
\item[(1)] If $b_u\in \beta$, then $\eta\in \mathcal D_{j,r}(m)$, $\beta\subset \eta^r$, $\eta^r\setminus \beta$ has exactly $r$ parts equal to $b_u$, $m_\eta(b_u)>2r$ and $r\nmid m_\eta(b_u)$.
Thus, $\eta$ belongs to the set  
$$\mathcal D^{\hat\beta}_{j,r}(m):=\{\lambda \in \mathcal D_{j,r}(m)\mid \beta \subset \lambda^r, |\lambda^r\setminus \beta|=r, x\in \lambda^r\setminus \beta \implies m_\lambda(x)\geq 2r \text{ and } r\nmid m_\lambda(x)\}.$$

\item[(2)] If $b_u\not\in \beta$, then $\eta\in \mathcal D_{j+1,r}(m)$, $\beta\subset \eta^r$, $\eta^r\setminus \beta$ has exactly $r$ parts equal to $b_u$  and $r<m_\eta(b_u)<2r$. Thus, $\eta$ belongs to the set
$$\mathcal D^{\beta^*}_{j+1,r}(m):=\{\lambda \in \mathcal D_{j+1,r}(m) \mid \beta \subset \lambda^r, x\in \lambda^r\setminus \beta \implies r<m_\lambda(x)<2r \}.$$

\end{itemize}

Hence, the double sum in \eqref{b'} is equal to
$$\sum_{s\geq 0}\sum_{\beta\in \mathcal B_{j,r}(s)}\Big(|\mathcal D^{\hat\beta}_{j,r}(m)|+|\mathcal D^{\beta^*}_{j+1,r}(m)|\Big).$$
Given a partition $\xi\in \mathcal D_{j,r}(m)$, for each part $a_i$ in $\xi$ with $m_\xi(a_i)\geq 2r$ and $r\nmid m_\xi(a_i)$, let $\beta_{a_i}$ be the partition obtained from $\xi^r$ by removing $r$ parts equal to $a_i$. Then $\xi\in \mathcal D^{\widehat{\beta_{a_i}}}_{j,r}(m)$. Thus $\sum_{s\geq 0}\sum_{\beta\in \mathcal B_{j,r}(s)}|\mathcal D^{\hat\beta}_{j,r}(m)|$ equals the number  of all (different) parts with multiplicity at least $2r$ and not divisible by $r$ in all partitions in $\mathcal D_{j,r}(m)$. 

By a similar argument, $\sum_{s\geq 0}\sum_{\beta\in \mathcal B_{j,r}(s)}|\mathcal D^{\beta^*}_{j+1,r}(m)|$ equals the number  of all (different) parts with multiplicity greater than $r$ and less than $2r$ in all partitions in $\mathcal D_{j+1,r}(m)$, i.e., $T_{j+1,r}(m)$.

It follows that
\begin{equation}\label{tt}
\sum_{s\geq 0}\sum_{\beta\in \mathcal B_{j,r}(s)}T_{1,r}(m-s)=\sum_{s\geq 0}\sum_{\beta\in \mathcal B_{j,r}(s)}|\mathcal D^{\hat\beta}_{j,r}(m)|+T_{j+1,r}(m).
\end{equation}
On the other hand, by the definition of $\bar g(\lambda)$, 
\begin{equation}\label{gg}
\sum_{\lambda\in \mathcal D_{j,r}(m)} \bar g(\lambda)=\sum_{s\geq 0}\sum_{\beta\in \mathcal B_{j,r}(s)}|\mathcal D^{\hat\beta}_{j,r}(m)|+T_{j,r}(m).
\end{equation}
Finally, substituting \eqref{tt} and \eqref{gg} into \eqref{b'} completes the combinatorial proof.
\end{proof}

\end{document}